\newcommand{\blue}[1]{\textcolor{blue}{#1}}
\let\c@table\c@figure
\newtheorem{theorem}{Theorem}[section]
\newtheorem*{RubinsTheorem}{Rubin's Theorem}
\newtheorem{proposition}[theorem]{Proposition}
\newtheorem*{proposition*}{Proposition}
\newtheorem{lemma}[theorem]{Lemma}
\theoremstyle{definition}
\theoremstyle{remark}
\newtheorem{remark}[theorem]{Remark}
\newcommand{\newword}[1]{\textbf{#1}}
\DeclareMathOperator{\supp}{supp}
\DeclareMathOperator{\rsupp}{rsupp}
\DeclareMathOperator{\Orb}{Orb}
\newcommand{\F}{\mathcal{F}}
\newcommand{\R}{\mathcal{R}}
\newcommand{\B}{\mathcal{B}}
\newcommand{\cl}{\mathrm{cl}}
\title{A short proof of Rubin's theorem}
\author{James Belk}
\address{School of Mathematics and Statistics, University of Glasgow, University Place, Glasgow, G12~8QQ, Scotland.}
\email{\href{mailto:jim.belk@glasgow.ac.uk}{jim.belk@glasgow.ac.uk}}
\thanks{The first author has been partially supported by EPSRC grant EP/R032866/1 as well as the National Science Foundation under Grant No.~DMS-1854367 during the creation of this paper.}
\author{Luke Elliott}
\address{Department of Mathematics, Binghamton University, Binghamton, NY 13902, USA.}
\email{\href{mailto:lelliott@binghamton.edu}{lelliott@binghamton.edu}}
\thanks{The second author has been supported by EPSRC grant EP/R032866/1 while working on a project with Dr Collin Bleak}
\author{Francesco Matucci}
\address{Dipartimento di Matematica e Applicazioni, Universit\`{a} degli Studi di Milano--Bicocca, Milan 20125, Italy.}
\email{\href{mailto:francesco.matucci@unimib.it}{francesco.matucci@unimib.it}}
\thanks{The third author is a member of the Gruppo Nazionale per le Strutture Algebriche, Geometriche e le loro Applicazioni (GNSAGA) of the Istituto Nazionale di Alta Matematica (INdAM) and gratefully acknowledges the support of the 
Funda\c{c}\~ao para a Ci\^encia e a Tecnologia  (CEMAT-Ci\^encias FCT projects UIDB/04621/2020 and UIDP/04621/2020) and of the Universit\`a degli Studi di Milano--Bicocca
(FA project ATE-2017-0035 ``Strutture Algebriche'').
}
\date{}  
\begin{document}

\begin{abstract}In a remarkable theorem, M.~Rubin proved that if a group $G$ acts in a locally dense way on a locally compact Hausdorff space $X$ without isolated points, then the space $X$ and the action of $G$ on $X$ are unique up to $G$-equivariant homeomorphism.  Here we give a short, self-contained proof of Rubin's theorem, using equivalence classes of ultrafilters on a poset to reconstruct the points of the space~$X$.
\end{abstract}

\maketitle

In 1989, Matatyahu Rubin proved a remarkable theorem about groups acting sufficiently nicely on locally compact spaces \cite[Corollary~3.5]{Rub89}.  Specifically, a faithful action of a group $G$ on a topological space $X$ is called a \newword{Rubin action} if
\begin{enumerate}
    \item $X$ is locally compact, Hausdorff, and has no isolated points, and\smallskip
    \item For each open set $U\subseteq X$ and each point $p\in U$, the closure of the orbit of~$p$ under the group
    \[
    G_U = \{g\in G \mid \supp(g)\subseteq U\}
    \]
    contains a neighborhood of~$p$.
\end{enumerate}
Here $\supp(g)$ denotes the \newword{support} of $g$, i.e.~the set of all points $p\in X$ for which $g(p)\ne p$.  Following Brin \cite{BrinHigher}, we will refer to actions satisfying condition~(2) as \newword{locally dense}.  Note that condition (2) is equivalent to Rubin's assertion that none of the points of $U$ has nowhere dense orbit under $G_U$.  

Rubin proved that any Rubin action of a group must be essentially unique:

\begin{RubinsTheorem}
If a group $G$ has Rubin actions on two topological spaces $X$ and~$Y$, then there exists a $G$-equivariant homeomorphism $X\to Y$.
\end{RubinsTheorem}

This theorem has proven quite useful in geometric group theory, where interesting examples of Rubin actions are abundant.  For example, the standard actions of Thompson's groups $F$, $T$, and $V$ on the interval $(0,1)$, the circle, and the Cantor set $\{0,1\}^\omega$, respectively, are Rubin actions, and many other Thompson-like groups have Rubin actions on associated spaces. Rubin's theorem has been used to understand the automorphisms of Thompson-like groups~\cite{BCMNO, BieriStrebel, BrinChameleon, BrinGuzman, KMN, Olukoya, Elliott} and for classifying such groups up to isomorphism~\cite{BDJ, BlLa, BrinHigher, DiMP, KimKobLod, Liousse, Lodha, NekExpanding}. It is well-known that the action of Grigorchuk's group~$\mathcal{G}$ of intermediate growth on the Cantor set of ends of the infinite binary tree is also a Rubin action, and many other self-similar groups also have Rubin actions on associated Cantor spaces.  Finally, note that the action of the full group of homeomorphisms or diffeomorphisms of any manifold is a Rubin action, and indeed Rubin's theorem implies Whittaker's theorem on the reconstruction of manifolds from their homeomorphism groups~\cite{Whi} and can be used to prove the Takens--Filipkiewicz theorem on the reconstruction of manifolds from their diffeomorphism groups \cite{Fil,Tak}.  See \cite{KimKob} for a nice exposition of all of these theorems (including Rubin's theorem) and the relationships between them, as well as further applications.

Rubin actually proved many different reconstruction theorems, and both of the proofs that Rubin gave of the above theorem (in \cite{Rub89} and later in \cite{Rub}) were in the context of this more general development.
Here we give a short, self-contained version of the proof of Rubin's theorem as stated above, following the same basic outline as Rubin's second proof \cite[Theorem~3.1]{Rub}.   Starting with a Rubin action of a group $G$ on a space~$X$, our goal is to reconstruct $X$ entirely from the algebraic structure of~$G$.  That is, we wish to use $G$ itself to construct a new space $\widetilde{X}$ on which it acts, and then prove that there is a $G$-equivariant homeomorphism $X\to\widetilde{X}$.  This proof has the following steps:
\begin{enumerate}
\item In Section~1, we define a first-order relation on $G$ which we call ``algebraic disjointness'', which we show is closely related to elements of $G$ having disjoint supports in~$X$.\smallskip
\item In Section~2 we define the ``regular support'' $U$ of any element $g\in G$ to be the interior of the closure of its support in~$X$, and we show that we can use algebraic disjointness to define the subgroup $G_U$ without reference to the action of $G$ on~$X$.\smallskip
\item Finally, in Section~3 we define the poset $\R$ of all finite, nonempty intersections of regular supports of elements of~$G$.  This is isomorphic to the poset of all of the corresponding subgroups $G_U$ ($U\in\R$).  We prove that it is possible to reconstruct the points of $X$ as equivalence classes of ultrafilters on~$\R$, and show that the resulting space $\widetilde{X}$ admits a $G$-equivariant homeomorphism $\widetilde{X}\to X$. 
\end{enumerate}
We have tried to simplify Rubin's proofs as much as possible throughout.  In Section~3, our approach differs from Rubin's in that we concentrate on the poset $\R$ instead of the full Boolean algebra of regular open sets, which leads to some simplifications in the argument.

\section{Algebraic disjointness}

Given a group $G$ and elements $f,g\in G$, we say $g$ is \newword{algebraically disjoint} from $f$ if it satisfies the following mysterious condition:
\begin{quote}
For every $h\in G$ with $[f,h]\ne 1$, there exist $f_1,f_2\in C_G(g)$ so that $\bigl[f_1,[f_2,h]\bigr]$ is a nontrivial element of $C_G(g)$.
\end{quote}
Here $C_G(g)$ denotes the centralizer of $g$ in~$G$.

The idea of algebraic disjointness is that it is an entirely algebraic property of group elements which is not very different from the statement ``$f$ and $g$ have disjoint supports'', as shown in Proposition~\ref{prop:AlgebraicDisjointness} below.  The proof below was first given by Rubin~\cite[Lemma~2.17]{Rub}, though we state the proposition with slightly more general hypotheses.

For the following proposition, we say that a group $G$ of homeomorphisms of a space $X$ is \newword{locally moving} if $G_U\ne 1$ for every nonempty open set $U\subseteq X$.  Note that this condition follows easily from local density as long as $X$ is Hausdorff and has no isolated points. (The Hausdorff condition is necessary here, e.g.~if $X$ is indiscrete and $G$ is trivial.) However, being locally moving is strictly weaker than being locally dense, since it allows the space $X$ to have invariant subsets which are nowhere dense, including global fixed points.

It is an interesting fact that every locally moving group of homeomorphisms of a nonempty Hausdorff space satisfies no laws.  This follows from a result of Ab\'ert~\cite{Abert} and was first observed by Nekrashevych (see~\cite[Theorem~3.6.24]{KimKob}).


\begin{proposition}\label{prop:AlgebraicDisjointness}
Let $G$ be a locally moving group of homeomorphisms of a Hausdorff space~$X$.  Then for all $f,g\in G$:
\begin{enumerate}
    \item If\/ $\supp(f)\cap\supp(g)=\emptyset$, then $g$ is algebraically disjoint from~$f$.\smallskip
    \item If $g$ is algebraically disjoint from $f$, then\/ $\supp(f)\cap\supp(g^{12})=\emptyset$.
\end{enumerate} \end{proposition}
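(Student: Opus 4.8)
The plan is to prove both parts by elementary book-keeping with supports, using two standing facts valid for any homeomorphism $\phi$ of a Hausdorff space $X$: the set $\supp(\phi)$ is open (it is the complement of the closed fixed-point set), and if $\supp(\phi)\cap\supp(\psi)=\emptyset$ then $\phi$ and $\psi$ commute — so $[\phi,\psi]\neq1$ forces their supports to meet. I would first record a consequence of local moving-ness needed in part~(1): since a homeomorphism supported in a single point is the identity, $X$ has no isolated points, and for every nonempty open $W$ the group $G_W$ contains $f_1,f_2$ with $[f_1,f_2]\neq1$. Indeed, take $1\neq a\in G_W$, pick a point $a$ moves and (by Hausdorffness) a nonempty open $W'\subseteq W$ with $a(W')\cap W'=\emptyset$, and take $1\neq c\in G_{W'}$; then $[a,c]=(aca^{-1})\,c^{-1}$ is a product of two commuting factors supported in the disjoint sets $a(W')$ and $W'$, hence nontrivial (it restricts to $c^{-1}$ on $W'$) and supported in $W$.

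For part~(1), assume $\supp(f)\cap\supp(g)=\emptyset$ and let $h$ satisfy $[f,h]\neq1$. Then $\supp(f)$ and $\supp(h)$ meet; choosing $p$ in their intersection and separating $p$ from $h(p)$, I get a nonempty open $W\subseteq\supp(f)$ with $h(W)\cap W=\emptyset$, and then $W$ is automatically disjoint from $\supp(g)$. Pick $f_1,f_2\in G_W$ with $[f_1,f_2]\neq1$; since $\supp(f_i)\subseteq W$ misses $\supp(g)$, both lie in $C_G(g)$. Because $h(W)\cap W=\emptyset$, writing $[f_2,h]=f_2\,(hf_2^{-1}h^{-1})$ shows $[f_2,h]$ agrees with $f_2$ on $W$ (the second factor being supported off $W$) and carries $W$ onto itself, so $[f_1,[f_2,h]]$ is supported in $W$ and agrees with $[f_1,f_2]$ there; it is therefore a nontrivial element supported in $W$, hence in $C_G(g)$. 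As $h$ was arbitrary, $g$ is algebraically disjoint from $f$.

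For part~(2) I would prove the contrapositive, so suppose $p\in\supp(f)\cap\supp(g^{12})$. Since $g^{12}(p)\neq p$ and $\operatorname{lcm}(1,2,3,4)=12$, the $g$-orbit of $p$ has length more than $4$, so $p,g(p),\dots,g^{4}(p)$ are distinct, and $f(p)\neq p$. Choose a nonempty open $U\ni p$ with $f(U)\cap U=\emptyset$ and $U,g(U),\dots,g^{4}(U)$ pairwise disjoint, and take some $h\in G_U\setminus\{1\}$. As in part~(1), $f(U)\cap U=\emptyset$ makes $[f,h]$ restrict to $h^{-1}$ on $U$, so $[f,h]\neq1$. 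Now fix arbitrary $f_1,f_2\in C_G(g)$ and set $k=[f_1,[f_2,h]]$; a support computation (grouping commutators so as to see $\supp(h)\subseteq U$) shows $\supp(k)$ lies in the union $U\cup f_1(U)\cup f_2(U)\cup f_1f_2(U)$ of four translates of $U$. Suppose toward a contradiction that $k\neq1$ and $k\in C_G(g)$; then $\supp(k)$ is a nonempty $g$-invariant set. Every point of $U$ has $g$-period $>4$ (since $g^{j}(U)\cap U=\emptyset$ for $1\le j\le4$), and each of $f_1,f_2,f_1f_2$ lies in $C_G(g)$ and so preserves the $g$-period of every point; hence every point of $\supp(k)$ has $g$-period $>4$. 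Picking $y\in\supp(k)$, the five distinct points $y,g(y),\dots,g^{4}(y)$ all lie in $\supp(k)$, hence in only four translates of $U$, so two of them, say $g^{a}(y)$ and $g^{b}(y)$ with $0\le a<b\le4$, lie in a common translate $\phi(U)$ with $\phi\in\{1,f_1,f_2,f_1f_2\}\subseteq C_G(g)$. Applying $\phi^{-1}$ and commuting it past powers of $g$, the point $w:=g^{a}\phi^{-1}(y)$ and its image $g^{b-a}(w)$ both lie in $U$, contradicting $g^{b-a}(U)\cap U=\emptyset$. So no such $f_1,f_2$ exist and $g$ is not algebraically disjoint from $f$.

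The genuinely load-bearing step is the support book-keeping: one must check that the nested commutator $[f_1,[f_2,h]]$ stays supported in a controlled union of four translates of $\supp(h)$ (and in part~(1) collapses all the way back into $W$), and one must choose $U$ small enough that these four translates cannot contain a whole $g$-orbit. The only non-formal ingredient is that a locally moving group is rich enough to supply $f_1,f_2\in G_W$ with $[f_1,f_2]\neq1$; and the exponent $12$ enters precisely as $\operatorname{lcm}(1,2,3,4)$, so that $g^{12}(p)\neq p$ guarantees an orbit long enough to defeat four translates by pigeonhole (this is exactly why the statement involves a twelfth power rather than $g$ itself).
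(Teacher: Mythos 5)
Your proof is correct and follows essentially the same strategy as the paper's: part (1) by arranging the nested commutator $[f_1,[f_2,h]]$ to collapse back into a small open set inside $\supp(f)$ (your variant, producing non-commuting $f_1,f_2\in G_W$ and reading off $[f_1,f_2]$ on $W$, is a mild repackaging of the paper's nested $W\subset V$ construction), and part (2) by the same choice of a small neighbourhood with five disjoint $g$-translates, the same support estimate $\supp(k)\subseteq U\cup f_1(U)\cup f_2(U)\cup f_1f_2(U)$, and the same pigeonhole contradiction. Your extra step verifying that centralizer elements preserve $g$-periods (to get five \emph{distinct} orbit points) is correct but not actually needed, since the pigeonhole on the five indices already suffices.
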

\begin{proof}
For (1), suppose $f$ and $g$ have disjoint supports, and suppose that $h\in G$ and $[f,h]\ne 1$.  Then $h$ is not the identity on $\supp(f)$, so since $X$ is Hausdorff we can find a nonempty open set $V\subset \supp(f)$ such that $h(V)$ is disjoint from~$V$.  Let $f_2$ be a nontrivial element of~$G_V$.  Again, since $X$ is Hausdorff there exists a nonempty open set $W\subset V$ such that $f_2(W)$ is disjoint from~$W$.  Let $f_1$ be a nontrivial element of $G_W$, and note that $f_1,f_2\in C_G(g)$ since their supports lie in $\supp(f)$.  We claim that $\bigl[f_1,[f_2,h]\bigr]$ is a nontrivial element of $C_G(g)$.

Note first that $h f_2^{-1} h^{-1}$ is supported on~$h(V)$, and hence the commutator $k=[f_2,h]=f_2\bigl(hf_2^{-1}h^{-1}\bigr)$ agrees with $f_2$ on $V$.  Then $kf_1^{-1} k^{-1}$ is supported on $k(W)=f_2(W)$, so the commutator $[f_1,k]=f_1\bigl(kf_1^{-1}k^{-1}\bigr)$ is supported on $W\cup f_2(W)\subseteq V\subset \supp(f)$, and therefore commutes with $g$.  Furthermore, $[f_1,k]$ agrees with $f_1$ on $W$, and is therefore not the identity, which proves that $g$ is algebraically disjoint from~$f$.

For statement (2), suppose that $g$ is algebraically disjoint from $f$, and suppose to the contrary that the set $U = \supp(f)\cap \supp(g^{12})$ is nonempty.  Note that $U\subseteq \supp(g^i)$ for $i=1,2,3,4$ since $1$, $2$, $3$, and $4$ are divisors of~$12$.  Since $X$ is Hausdorff, we can find a nonempty open set $V\subseteq U$ such that $f(V)$ is disjoint from $V$ and the sets $\{g^i(V)\}_{i=0}^4$ are pairwise disjoint.  Let $h$ be a nontrivial element of~$G_V$, and note that $[f,h]\ne 1$ since $f(V)$ is disjoint from~$V$.  Since $g$ is algebraically disjoint from~$f$, there exist $f_1,f_2\in C_G(g)$ so that the commutator $h'=\bigl[f_1,[f_2,h]\bigr]$ is a nontrivial element of~$C_G(g)$.

Now observe that $\supp([f_2,h])\subseteq V\cup f_2(V)$, and by the same reasoning
\[
\supp(h') \subseteq V \cup f_1(V) \cup f_2(V) \cup f_1f_2(V).
\]
Since $h'$ is nontrivial, it has at least one point $p$ in its support.  Since $g$ commutes with $h'$, all five of the points $\{g^i(p)\}_{i=0}^4$ lie in $\supp(h')$. By the pigeonhole principle, one of the four sets $V,f_1(V),f_2(V),f_1f_2(V)$ must contain two of these points, say $g^i(p),g^j(p)\in k(V)$ for some $0\leq i<j\leq 4$ and $k\in \{1,f_1,f_2,f_1f_2\}$.  But since $g^{j-i}(V)$ is disjoint from $V$ and $k$ commutes with $g$, we know that $g^{j-i}\bigl(k(V)\bigr)$ is disjoint from $k(V)$, a contradiction since $g^i(p)$ and $g^j(p)$ both lie in $k(V)$.
\end{proof}

\begin{remark}
Though we will not need this observation, it follows easily from the definition that if $g$ is algebraically disjoint from $f$ then $[f,g]=1$; for otherwise we can choose $h=g$, and the commutator $[f_1,[f_2,h]]$ can never be nontrivial.
\end{remark}

\begin{remark}
Algebraic disjointness is not equivalent to having disjoint supports.  For example, if $G$ is the symmetric group $S_n$ and $f=g=(1\;2)$, then it is easy to prove that $g$ is algebraically disjoint from $f$ for $n=2$ or $n\geq 5$.  Similarly, if $G$ is the full group of homeomorphisms of the Cantor set $\{0,1\}^\omega$, then the element of~$G$ of order two that switches the first digit of an infinite binary sequence turns out to be algebraically disjoint from itself (see the \hyperref[sec:Appendix]{Appendix}).

Algebraic disjointness is also not a symmetric relation.  For example, if $G=S_4$, $f=(1\;2)(3\;4)$, and $g=(1\;2)$, then $g$ is algebraically disjoint from~$f$, but not vice-versa.
\end{remark}

\section{Regular supports}
Given an action of a group $G$ on a space $X$, define the \newword{regular support} of an element $g\in G$, denoted $\rsupp(g)$, to be the interior of the closure of~$\supp(g)$.  The following properties of regular supports are easy to prove:
\begin{itemize}
\item The set $\rsupp(g)$ is always a \newword{regular open set} in~$X$, i.e.~an open set which is equal to the interior of its closure.\smallskip
\item We have $\supp(g)\subseteq \rsupp(g)$ for any $g\in G$.\smallskip
\item If $U\subseteq X$ is a regular open set and $g\in G_U$, then $\rsupp(g)\subseteq U$.
\end{itemize}
%
%
The following proposition (adapted from~\cite[Proposition~2.19]{Rub}) lets us construct the group $G_U$ algebraically when $U$ is the regular support of an element of~$G$.

\begin{proposition}\label{prop:ConstructGU}
Let $G$ be a locally moving group of homeomorphisms of a Hausdorff space $X$.  Let $f\in G$, let $U=\rsupp(f)$, and let
\[
S_f = \{g^{12} \mid g\in G\text{ and $g$ is algebraically disjoint from $f$}\}.
\]
Then the centralizer of $S_f$ in $G$ is precisely~$G_U$.
\end{proposition}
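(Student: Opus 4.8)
The plan is to prove the two inclusions $G_U\subseteq C_G(S_f)$ and $C_G(S_f)\subseteq G_U$ separately. The first is routine: if $h\in G_U$ and $g^{12}\in S_f$, so that $g$ is algebraically disjoint from $f$, then Proposition~\ref{prop:AlgebraicDisjointness}(2) gives $\supp(f)\cap\supp(g^{12})=\emptyset$. Since $\supp(g^{12})$ is open and disjoint from $\supp(f)$, it is disjoint from $\cl(\supp(f))$, and hence from $U=\rsupp(f)\subseteq\cl(\supp(f))$; as $\supp(h)\subseteq U$, the elements $h$ and $g^{12}$ have disjoint supports and therefore commute. Since $g$ was arbitrary, $h$ centralizes $S_f$, i.e.\ $h\in C_G(S_f)$.

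For the reverse inclusion I would argue by contradiction. Suppose $h\in C_G(S_f)$ but $\supp(h)\not\subseteq U$, and pick $p\in\supp(h)\setminus\rsupp(f)$. Since $\rsupp(f)$ is the interior of $\cl(\supp(f))$ and $p$ is not one of its points, every neighborhood of $p$ meets the open set $O:=X\setminus\cl(\supp(f))$; in particular $\supp(h)$ meets $O$, so $W_0:=\supp(h)\cap O$ is a nonempty open set contained in $\supp(h)$ and disjoint from $\supp(f)$. Choosing $q\in W_0$ we have $h(q)\ne q$, so by the Hausdorff property there is a nonempty open set $W$ with $q\in W\subseteq W_0$ and $h(W)\cap W=\emptyset$.

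The key step is to produce an element $g\in G_W$ with $g^{12}\ne 1$. Here I would use that $G_W$ acts faithfully on the nonempty Hausdorff space $W$---each $g\in G_W$ fixes $X\setminus W$ pointwise, hence preserves $W$, and only the identity acts trivially on it---and that this action is locally moving, since $(G_W)_V=G_V\ne 1$ for every nonempty open $V\subseteq W$. By the fact recorded above that a locally moving group of homeomorphisms of a nonempty Hausdorff space satisfies no group law, $x^{12}=1$ is not a law of $G_W$, so the desired $g$ exists. Since $\supp(g)\subseteq W\subseteq W_0$ is disjoint from $\supp(f)$, Proposition~\ref{prop:AlgebraicDisjointness}(1) shows that $g$ is algebraically disjoint from $f$; hence $g^{12}\in S_f$, and $h$ commutes with $g^{12}$. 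But then $\supp(g^{12})=\supp\bigl(hg^{12}h^{-1}\bigr)=h\bigl(\supp(g^{12})\bigr)\subseteq h(W)$, while also $\supp(g^{12})\subseteq\supp(g)\subseteq W$, so $\supp(g^{12})\subseteq W\cap h(W)=\emptyset$---contradicting $g^{12}\ne 1$. Therefore $\supp(h)\subseteq U$, i.e.\ $h\in G_U$, completing the proof.

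The only non-formal ingredient is the production of $g\in G_W$ with $g^{12}\ne 1$, and I expect this to be the point requiring care: a locally moving group can be torsion (Grigorchuk's group acting on the boundary of the binary tree is such an example), so one genuinely needs the ``no laws'' statement recorded above rather than, say, the existence of an element of infinite order. Everything else is manipulation of supports, using only that homeomorphisms with disjoint supports commute, that $\supp(kgk^{-1})=k(\supp(g))$, and that an open set disjoint from a set $B$ is disjoint from $\cl(B)$.
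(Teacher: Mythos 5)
Your proof is correct, and both inclusions are handled exactly as in the paper: the forward direction via Proposition~\ref{prop:AlgebraicDisjointness}(2) and the fact that an open set disjoint from $\supp(f)$ is disjoint from $\cl(\supp(f))\supseteq U$, and the reverse direction by locating a nonempty open $W\subseteq\supp(h)$ disjoint from $\supp(f)$ with $h(W)\cap W=\emptyset$ and producing $g\in G_W$ with $g^{12}\ne 1$, whence $g^{12}\in S_f$ fails to commute with $h$. The one substantive difference is how you produce that $g$: you invoke the Ab\'ert--Nekrashevych fact that locally moving groups on nonempty Hausdorff spaces satisfy no laws (applied to $G_W$ acting on $W$), correctly checking that this restricted action is faithful and locally moving. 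The paper instead proves the much weaker statement it actually needs as Lemma~\ref{lem:InfiniteExponent} (each $G_W$ has infinite exponent), via a short self-contained argument about maximal periods. Your route is legitimate but outsources the key step to a cited theorem that the paper mentions only as an aside and does not prove; the paper's route keeps the argument elementary and self-contained, which is the stated goal of the exposition. You are right to flag that one cannot simply ask for an element of infinite order here---torsion locally moving groups exist---but infinite exponent of $G_W$, not the full absence of laws, is all that is required, and it admits the direct proof given in Lemma~\ref{lem:InfiniteExponent}.
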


First we need the following lemma.

\begin{lemma}\label{lem:InfiniteExponent}
Let $G$ be a locally moving group of homeomorphisms of a Hausdorff space~$X$. Then for each nonempty open set $U\subseteq X$, the group $G_U$ has infinite exponent.
\end{lemma}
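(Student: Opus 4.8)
The plan is to prove, by induction on $N$, the more precise statement that for every nonempty open $V\subseteq X$ and every integer $N\ge 1$ the group $G_V$ contains an element whose order is infinite or at least $N$; taking $V=U$ then gives the lemma, since a group of finite exponent has all element-orders bounded by that exponent. The base case $N=1$ is immediate from local moving, so the real content is an amplification step: from a good element of some $G_W$ with $W\subseteq V$, produce a better one in $G_V$.

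The engine is an elementary fact about homeomorphism groups. If $a\in G$ has finite order $p$ and $W\subseteq X$ is a set for which $W,a(W),\dots,a^{p-1}(W)$ are pairwise disjoint, then for every $d\in G$ with $\supp(d)\subseteq W$ the product $ad$ has order exactly $p\cdot\operatorname{ord}(d)$ (with $p\cdot\infty=\infty$). Indeed $d$ fixes each set $a^{i}(W)$ setwise, so $ad$ cyclically permutes the $p$ disjoint sets $a^{i}(W)$, whence any power of $ad$ equal to the identity has exponent divisible by $p$. On the other hand, expanding $(ad)^{p}$, moving all copies of $a$ to the left and using $a^{p}=1$, one gets $(ad)^{p}=\prod_{j=0}^{p-1}a^{-j}da^{j}$, a product of $p$ pairwise-disjointly-supported conjugates of $d$, so $(ad)^{p}$ has the same order as $d$. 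Combining these two observations via $\operatorname{ord}\big((ad)^{p}\big)=\operatorname{ord}(ad)/\gcd\big(\operatorname{ord}(ad),p\big)$ gives the stated order.

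For the inductive step, fix $N\ge 2$ and a nonempty open $V$. If $G_V$ has an element of infinite order we are done, so assume $G_V$ is torsion; pick $1\ne a\in G_V$, a prime $p$ dividing $\operatorname{ord}(a)$, and $b=a^{\operatorname{ord}(a)/p}$, an element of order $p$. Since $b\ne 1$ and $X$ is Hausdorff, $b$ moves a point $x\in\supp(b)\subseteq V$, and the $\langle b\rangle$-orbit of $x$ has size $p$ (a divisor of the prime $p$ exceeding $1$). Using Hausdorffness to separate the distinct points $x,b(x),\dots,b^{p-1}(x)$ by open sets, and then shrinking near $x$ by continuity of the powers of $b$, I obtain a nonempty open $W\subseteq V$ with $W,b(W),\dots,b^{p-1}(W)$ pairwise disjoint. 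The inductive hypothesis applied to $W$ and $N'=\lceil N/p\rceil<N$ yields $d'\in G_W$ of order $\ge N'$ (or infinite), and the engine fact shows $bd'\in G_V$ has order $p\cdot\operatorname{ord}(d')\ge pN'\ge N$ (or infinite). This completes the induction.

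The one delicate point is the engine fact — specifically that $ad$ has order exactly, not merely dividing, $p\cdot\operatorname{ord}(d)$; the block-permutation observation is what forces $p\mid\operatorname{ord}(ad)$. Everything else — reducing to the torsion case, extracting the prime-order element $b$, and carving the chain of disjoint neighborhoods out of Hausdorffness — is routine. (One could also deduce the lemma in one line from the fact noted earlier that locally moving groups of homeomorphisms of nonempty Hausdorff spaces satisfy no laws, applied to the action of $G_U$ on $U$, which is again locally moving; the argument above avoids this and stays self-contained.)
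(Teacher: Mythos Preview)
Your proof is correct. The engine fact is sound (the block-permutation argument gives $p\mid\operatorname{ord}(ad)$, and the disjointly-supported-conjugates computation gives $\operatorname{ord}((ad)^p)=\operatorname{ord}(d)$, so indeed $\operatorname{ord}(ad)=p\cdot\operatorname{ord}(d)$), and the induction is set up properly: for $N\ge 2$ and $p\ge 2$ one has $\lceil N/p\rceil<N$, and even in the edge case $N'=1$ where the hypothesis yields only $d'=1$, the product $bd'=b$ already has order $p\ge 2=N$. The one cosmetic point is that the base case $N=1$ does not actually need local moving; that hypothesis first enters in the inductive step to produce the nontrivial $a$.

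The paper's proof rests on exactly the same engine---compose an element cyclically permuting disjoint sets $g^i(V)$ with a nontrivial $h\in G_V$ to force larger period---but packages it as a short contradiction rather than an induction. It assumes $G_U$ has finite exponent, picks $g\in G_U$ and $p\in U$ realizing the \emph{maximal} period $n$, finds $V$ with $V,g(V),\dots,g^{n-1}(V)$ disjoint, and observes that for any nontrivial $h\in G_V$ the element $hg$ still cycles the blocks (so periods on $V$ are multiples of $n$, hence exactly $n$ by maximality), while $(hg)^n$ agrees with $h$ on $V$ and is therefore nontrivial there: contradiction. This avoids your extraction of a prime-order element and the exact order computation, at the cost of being nonconstructive. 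Your version is longer but explicitly manufactures elements of arbitrarily large order, which is a slightly stronger conclusion. Your closing parenthetical about deducing the lemma from the no-laws fact is also on point; the paper mentions that fact but, like you, keeps the argument self-contained.
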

\begin{proof}
Suppose to the contrary that some $G_U$ has finite exponent.  Then we can choose a $g\in G_U$ and a point $p\in U$ for which the period $n$ of $p$ under $g$ is as large as possible.  Since $X$ is Hausdorff, there exists a neighborhood $V$ of $p$ so that the sets $\{g^i(V)\}_{i=0}^{n-1}$ are pairwise disjoint.  Note then that every point in $V$ has period $n$ under~$g$ (this being the maximum allowed period), with $g$ cyclically permuting the sets~$g^i(V)$.  Let $h$ be any nontrivial element of~$G_V$.  Then $hg$ cyclically permutes the sets $g^i(V)$, so every point in $V$ must have  period $n$ under $hg$. But $(hg)^n$ agrees with $h$ on~$V$ and is therefore not the identity on~$V$, a contradiction.
\end{proof}

\begin{proof}[Proof of Proposition~\ref{prop:ConstructGU}]
Let $h\in G$, and suppose first that $h\in G_U$.  Consider an element $g^{12}\in S_f$, where $g\in G$ is algebraically disjoint from~$f$. By Proposition~\ref{prop:AlgebraicDisjointness}, the open sets $\supp(g^{12})$ and $\supp(f)$ must be disjoint. Then $\supp(g^{12})$ is disjoint from the closure of $\supp(f)$, so $\supp(g^{12})$ is disjoint from~$U$.  Since $h\in G_U$, it follows that $h$ commutes with $g^{12}$, and therefore $h$ lies in the centralizer of~$S_f$.

Now suppose $h\notin G_U$, so $\supp(h)$ is not contained in~$U$.  Then $\supp(h)$ is not contained in the closure of $\supp(f)$, so there exists a nonempty open set $V\subseteq \supp(h)$ which is disjoint from~$\supp(f)$.  Since $X$ is Hausdorff, there exists a nonempty open set $W\subseteq V$ so that $h(W)$ is disjoint from $W$. By Lemma~\ref{lem:InfiniteExponent}, there exists a $g\in G_W$ so that $g^{12}\ne 1$.  Since $\supp(g)\cap \supp(f)=\emptyset$, Proposition~\ref{prop:AlgebraicDisjointness} tells us that $g$ is algebraically disjoint from~$f$, and hence $g^{12}\in S_f$.  But $h$ does not commute with~$g^{12}$ since $\supp(g^{12})\subseteq W$ and $h(W)$ is disjoint from $W$, and therefore $h$ is not in the centralizer of~$S_f$.  
\end{proof}

\begin{remark}
In the case where $X$ is Hausdorff and $G$ is locally moving, it follows from Proposition~\ref{prop:ConstructGU} that the relation ``$f$ and $g$ have disjoint supports'' can be determined entirely from the algebraic structure of $G$ (as a first-order sentence), improving on the algebraic disjointness relation defined in Section~1.  Specifically, observe that $\supp(f)$ and $\supp(g)$ are disjoint if and only if the regular supports $U=\rsupp(f)$ and $V=\rsupp(g)$ are disjoint. Since $G$ is locally moving, this occurs if and only if $G_U\cap G_V= 1$, which by Proposition~\ref{prop:ConstructGU} is equivalent to the (first-order) statement that the centralizers of $S_f$ and $S_g$ intersect trivially.
\end{remark}

\section{Proof of Rubin's theorem}
Given a group $G$ of homeomorphisms of a space $X$, let $\R$ be the collection of all nonempty intersections $\rsupp(g_1)\cap \cdots \cap \rsupp(g_n)$ for $g_1,\ldots,g_n\in G$.  Note that every set in $\R$ is regular open, being a finite intersection of regular open sets.  The collection $\R$ forms a poset under inclusion, and the group $G$ acts on this poset in an order-preserving way.

By Proposition~\ref{prop:ConstructGU}, if $G$ is locally moving and $X$ is Hausdorff, then we can reconstruct the subgroups $G_{\rsupp(g)}$ for $g\in G$ entirely from the algebraic structure of $G$.  Since
\[
G_{\rsupp(g_1)\,\cap\,\cdots\,\cap\, \rsupp(g_n)} = G_{\rsupp(g_1)}\cap\cdots\cap G_{\rsupp(g_n)}
\]
for $g_1,\ldots,g_n\in G$, we can also reconstruct all subgroups $G_U$ for $U\in\R$.  By the following proposition, this allows us to reconstruct the entire poset~$\R$.

\begin{proposition}
Let $G$ be a locally moving group of homeomorphisms of a space~$X$, and let $U,V\subseteq X$ be regular open sets.  Then $U\subseteq V$ if and only if $G_U\subseteq G_V$.
\end{proposition}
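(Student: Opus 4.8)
The plan is to prove the two implications separately. The forward implication is immediate: if $U\subseteq V$ and $g\in G_U$, then $\supp(g)\subseteq U\subseteq V$, so $g\in G_V$; hence $G_U\subseteq G_V$. This direction uses nothing about regular openness and does not require the locally moving hypothesis.

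For the converse I would argue by contraposition, producing a nontrivial element of $G_U$ that lies outside $G_V$ whenever $U\not\subseteq V$. The one genuinely topological step is the claim that, because $V$ is regular open, $U\not\subseteq V$ forces the open set $W:=U\setminus\overline{V}$ to be nonempty: otherwise $U\subseteq\overline{V}$, and since $U$ is open this would give $U\subseteq\mathrm{int}(\overline{V})=V$, a contradiction. Granting this, $W$ is a nonempty open subset of $U$ that is disjoint from $V$, so by the locally moving hypothesis there is a nontrivial element $g\in G_W$. Then $\supp(g)\subseteq W\subseteq U$ shows $g\in G_U$, while $\supp(g)$ is a nonempty subset of $W$ and hence disjoint from $V$, so $g\notin G_V$. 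This yields $G_U\not\subseteq G_V$, completing the contrapositive.

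The main obstacle — really the only nontrivial ingredient — is the observation that the failure of $U\subseteq V$ is witnessed on a nonempty open set disjoint from $V$; this is exactly where regular openness of $V$ is used, and the statement genuinely fails without it (for instance with $X=\mathbb{R}$, $U=\mathbb{R}$, $V=\mathbb{R}\setminus\{0\}$, and $G$ the group of all homeomorphisms of $\mathbb{R}$ fixing $0$, which is locally moving yet has $G_U=G=G_V$ while $U\not\subseteq V$). Everything else is a direct unwinding of the definition of $G_U$ together with a single application of the locally moving hypothesis.
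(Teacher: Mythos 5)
Your proposal is correct and follows essentially the same route as the paper: the forward direction is immediate, and the converse uses regular openness of $V$ to extract a nonempty open $W\subseteq U$ disjoint from $V$, then applies the locally moving hypothesis to $W$. The added counterexample showing that regular openness is genuinely needed is a nice touch but not part of the paper's argument.
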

\begin{proof}
Clearly $G_U\subseteq G_V$ if $U\subseteq V$.  For the converse, suppose $U\not\subseteq V$.  Since $V$ is a regular open set, it follows that $U\not\subseteq \cl(V)$, so there exists a nonempty open set $W\subseteq U$ which does not intersect~$V$.  Then any nontrivial element of $G_W$ lies in $G_U$ but not $G_V$, so $G_U\not\subseteq G_V$. 
\end{proof}

Note that the action of $G$ on $\R$ is the same as the conjugation action of $G$ on the subgroups $G_U$ for $U \in \R$, so we can use the algebraic structure of $G$ to reconstruct the poset $\R$ together with the action of $G$ on~$\R$.

The final part of the proof is to use the poset $\R$ to reconstruct the space~$X$.  This is based on the following observation.

\begin{proposition}\label{prop:BasisTopology}
Suppose we are given a Rubin action of a group $G$ on a space~$X$.  Then the elements of the associated poset $\R$ are a basis for the topology on~$X$.
\end{proposition}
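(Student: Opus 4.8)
The plan is to prove the equivalent statement that for every open set $U\subseteq X$ and every point $p\in U$ there is a single regular support $\rsupp(g)$ with $p\in\rsupp(g)\subseteq U$. Since every element of $\R$ is open and this condition is exactly the definition of a basis, that will finish the proof; in particular we will not even need to know that $\R$ is closed under finite intersections, as a single regular support will suffice.

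The argument has two steps. First I would shrink $U$ around $p$: because $X$ is locally compact and Hausdorff, it is regular, so there is an open set $W$ with $p\in W$ and $\cl(W)\subseteq U$. The purpose of this is that for any $g\in G_W$ we have $\supp(g)\subseteq W$, hence $\rsupp(g)=\mathrm{int}\bigl(\cl(\supp(g))\bigr)\subseteq\cl(\supp(g))\subseteq\cl(W)\subseteq U$; so every regular support arising from $G_W$ is automatically contained in $U$, and it only remains to find one that contains the point $p$. Second, I would invoke local density: applying condition~(2) of a Rubin action to the open set $W$ and the point $p$, the closure of $\Orb_{G_W}(p)$ contains a neighborhood of $p$. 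Since $X$ has no isolated points, that neighborhood, and therefore $\cl(\Orb_{G_W}(p))$, contains more than one point; as the closure of a one-point set in a Hausdorff space is that single point, the orbit $\Orb_{G_W}(p)$ itself must contain more than one point. Hence some $g\in G_W$ satisfies $g(p)\ne p$, i.e.\ $p\in\supp(g)$, and then $p\in\supp(g)\subseteq\rsupp(g)\subseteq U$ with $\rsupp(g)\in\R$, as desired.

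I do not expect a genuine obstacle here; the only point requiring care is the order of the two steps. One must replace $U$ by the smaller set $W$ \emph{before} appealing to local density, so that the regular support produced is trapped inside $U$ rather than merely inside $\mathrm{int}(\cl(U))$, which can be strictly larger than $U$. It is also worth flagging precisely where the standing hypotheses are used: local compactness and the Hausdorff property provide the regular neighborhood $W$ (and the Hausdorff property is used again for $\cl(\{p\})=\{p\}$), while the absence of isolated points is what upgrades ``the orbit closure contains a neighborhood of $p$'' to ``$p$ is genuinely moved by an element of $G_W$''.
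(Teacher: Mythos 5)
Your proof is correct and follows essentially the same route as the paper: shrink $U$ to a neighborhood $W$ of $p$ with $\cl(W)\subseteq U$ using local compactness and the Hausdorff property, then use local density to produce $g\in G_W$ with $g(p)\ne p$, so that $p\in\supp(g)\subseteq\rsupp(g)\subseteq\cl(W)\subseteq U$. The only difference is that you spell out the step the paper leaves implicit — that local density together with the absence of isolated points forces some element of $G_W$ to actually move $p$ — and that the paper additionally takes $\cl(W)$ compact, which is not needed for this particular proposition.
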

\begin{proof}
Let $U$ be an open set of $X$ and $p \in U$. Since $X$ is locally compact Hausdorff, there exists a  neighborhood $W$ of $p$ such that $\cl(W)$ is compact and $\cl(W)\subseteq U$.  Since $G$ is locally dense, there is at least one $g\in G_W$ that does not fix~$p$. Then $p\in \rsupp(g)\subseteq \cl(W)\subseteq U$, which proves that $U$ is a union of sets from $\R$.
\end{proof}

Our plan is to use ultrafilters on $\R$ to reconstruct the points of~$X$. \mbox{(See \cite[I.6--7]{Bourbaki}} for a general introduction to ultrafilters in topology.) First, recall that a \newword{prefilter} on a poset $(\mathcal{P},\leq)$ is a nonempty subset $\F$ of $\mathcal{P}$ that satisfies the following condition:
\begin{quote}
For all $x,y\in \mathcal{F}$ there exists a $z\in\mathcal{F}$ such that $z\leq x$ and $z\leq y$.
\end{quote}
A maximal prefilter is known as an \newword{ultrafilter} on $\mathcal{P}$.  By Zorn's lemma, every prefilter is contained in an ultrafilter. 

Now, if $Y$ is any topological space and $\B$ is a basis for the topology on $Y$, then we can regard $\B$ as a poset under inclusion. (We assume throughout that the empty set is not an element of any basis.) An ultrafilter $\F\subseteq \B$ is said to \newword{converge} to a point $p\in Y$ if every neighborhood of $p$ contains a set from~$\F$.  The following proposition lists some well-known properties of this form of convergence.

\begin{proposition}\label{prop:Ultrafilters}
Let $Y$ be a Hausdorff space and let $\B$ be a basis for the topology on~$Y$.  Then:
\begin{enumerate}
    \item Every ultrafilter on $\B$ converges to at most one point in~$Y$.\smallskip
    \item Every point in $Y$ has at least one ultrafilter on $\B$ that converges to it.\smallskip
    \item If $F\subseteq\B$ is an ultrafilter and $p\in Y$, then $\F$ converges to $p$ if and only if every element of $\mathcal{B}$ that contains $p$ lies in~$\mathcal{F}$.
\end{enumerate}
\end{proposition}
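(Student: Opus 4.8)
The plan is to prove the three statements in turn, the key device throughout being the family
\[
\F_p \;=\; \{A\in\B \mid p\in A\}
\]
of basic open sets containing a given point $p\in Y$. Since $\B$ is a basis (and, by our standing convention, $\emptyset\notin\B$), the intersection of two members of $\F_p$ is an open neighborhood of $p$ and hence contains a further member of $\F_p$; together with the fact that $\B$ covers $Y$, this shows that $\F_p$ is always a nonempty prefilter. I will also use repeatedly that every neighborhood $N$ of $p$ contains some $A\in\F_p$: choose an open set $V$ with $p\in V\subseteq N$ and then a basis element $A$ with $p\in A\subseteq V$.

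For statement~(1) I would argue by contradiction using the Hausdorff hypothesis. If an ultrafilter $\F$ converges to two distinct points $p$ and $q$, pick disjoint open neighborhoods $N_p\ni p$ and $N_q\ni q$; convergence supplies $A,B\in\F$ with $A\subseteq N_p$ and $B\subseteq N_q$, and the prefilter property supplies $C\in\F$ with $C\subseteq A\cap B=\emptyset$, contradicting $\emptyset\notin\B$. For statement~(2), given $p\in Y$, extend the prefilter $\F_p$ to an ultrafilter $\F$ using Zorn's lemma; then every neighborhood $N$ of $p$ contains some $A\in\F_p\subseteq\F$, so $\F$ converges to $p$.

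Statement~(3) is the only part requiring an idea beyond bookkeeping. The implication ``$\F_p\subseteq\F$ $\Rightarrow$ $\F$ converges to $p$'' is exactly the argument just used in~(2). For the converse, suppose $\F$ converges to $p$ and let $A\in\B$ with $p\in A$; since $A$ is itself an open neighborhood of $p$, convergence yields some $C_0\in\F$ with $C_0\subseteq A$. The claim is that $\F\cup\{A\}$ is again a prefilter: the only new instances of the prefilter condition concern a pair $A,B$ with $B\in\F$, and for such a pair the prefilter property of $\F$ applied to $C_0$ and $B$ produces a $z\in\F$ with $z\subseteq C_0\subseteq A$ and $z\subseteq B$. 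Since $\F\cup\{A\}$ is then a prefilter containing the \emph{maximal} prefilter $\F$, we must have $A\in\F$, i.e.\ $\F_p\subseteq\F$. The one spot where care is needed is precisely this last step: rather than extending $\F\cup\{A\}$ to an ultrafilter and comparing it with $\F$, one checks directly that $\F\cup\{A\}$ is a prefilter and then invokes maximality. Everything else is a routine unwinding of the definitions of basis, prefilter, ultrafilter, and convergence.
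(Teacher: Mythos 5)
Your proposal is correct and follows essentially the same route as the paper: part (1) from the Hausdorff condition plus the fact that members of a prefilter pairwise intersect, part (2) by extending the prefilter $\B_p$ via Zorn's lemma, and part (3) by enlarging $\F$ to a prefilter and invoking maximality. The only cosmetic difference is that in (3) you adjoin a single basic neighborhood $A$ at a time rather than all of $\B_p$ at once, which changes nothing essential.
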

\begin{proof}
For (1), since any two elements of $\F$ must intersect, it follows easily from the Hausdorff condition that $\F$ converges to at most one point in~$Y$.

For (2), if $p$ is any point in $Y$, then the collection $\B_p$ of all elements of $\B$ that contain $p$ is a prefilter.  By Zorn's lemma this is contained in some ultrafilter~$\F$, which then converges to~$p$.

For (3), let $\B_p$ be the collection of all elements of $\B$ that contain~$p$.  If $\B_p\subseteq\F$ then clearly $\F$ converges to~$p$.  For the converse, suppose $\F$ converges to~$p$.  We claim that $\F\cup \B_p$ is a prefilter in~$\B$.  Let $U,V\in \F\cup\B_p$.  If $U,V\in \F$ or $U,V\in\B_p$ we are done, so suppose without loss of generality that $U\in \F$ and $V\in \B_p$.  Since $\F$ converges to $p$, there exists a $V'\in\F$ so that $V'\subseteq V$, and since $\F$ is a prefilter there exists a $W\in\F$ so that $W\subseteq U\cap V'$.  Then $W\subseteq U\cap V$, which proves that $\F\cup\B_p$ is a prefilter.  Since $\F$ is a maximal prefilter, it follows that $\F\cup \B_p \subseteq \F$, and therefore $\B_p\subseteq \F$.
\end{proof}

Next, we need an easy criterion for determining whether an ultrafilter converges.  This is supplied by the following proposition, which will apply to our poset $\R$ since this poset is closed under finite, nonempty intersections. Part~(1) of the following proposition is \cite[Corollary~7.2]{Bourbaki}.

\begin{proposition}\label{prop:Ultrafilters2}
Let $Y$ be a Hausdorff space, let $\B$ be a basis for the topology on~$Y$, and suppose $\mathcal{B}$ is closed under finite, nonempty intersections.  Let $\F\subseteq\B$ be an ultrafilter on~$\B$.  Then:
\begin{enumerate}
    \item $\F$ converges to a point $p\in Y$ if and only if $p\in \bigcap_{U\in\F} \cl(U)$.\smallskip
    \item If $Y$ is locally compact, then $\F$ converges to some point in $Y$ if and only if $\F$ has at least one set whose closure is compact.
\end{enumerate}
\end{proposition}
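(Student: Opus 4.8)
The plan is to prove part~(1) first and then deduce part~(2) from it by a finite-intersection-property argument.

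For the forward direction of~(1), suppose $\F$ converges to $p$ and let $U\in\F$. If $p\notin\cl(U)$, then $Y\setminus\cl(U)$ is a neighborhood of $p$, so by convergence it contains some $V\in\F$; but then $U$ and $V$ are disjoint members of $\F$, contradicting the prefilter condition, which would furnish a nonempty element of $\B$ contained in $U\cap V=\emptyset$. Hence $p\in\cl(U)$ for every $U\in\F$, i.e.\ $p\in\bigcap_{U\in\F}\cl(U)$.

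For the converse of~(1), suppose $p\in\bigcap_{U\in\F}\cl(U)$ and let $W$ be any neighborhood of $p$; choose $V\in\B$ with $p\in V\subseteq W$. For each $U\in\F$, since $p\in\cl(U)$ and $V$ is a neighborhood of $p$, the set $V\cap U$ is nonempty, hence lies in $\B$ because $\B$ is closed under finite nonempty intersections. I claim that $\mathcal{H}=\F\cup\{\,V\cap U : U\in\F\,\}$ is a prefilter: given two members of $\mathcal{H}$, the prefilter property of $\F$ produces a common lower bound $U'\in\F$ of their "$\F$-parts", and then $V\cap U'$ (again a nonempty element of $\B$) is a common lower bound lying in $\mathcal{H}$. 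Since $\F$ is a maximal prefilter, $\mathcal{H}=\F$, so $V\cap U\in\F$ for every $U\in\F$; as $\F$ is nonempty, this exhibits an element of $\F$ contained in $V\subseteq W$. Therefore $\F$ converges to $p$. I expect this direction to be the main obstacle: it is the only place where one must genuinely combine the maximality of the ultrafilter with the closure of $\B$ under finite intersections, via the auxiliary family $\mathcal{H}$, to upgrade the condition $p\in\bigcap\cl(U)$ to actual convergence.

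For~(2), first suppose $\F$ converges to a point $p$. By local compactness, $p$ has a neighborhood $W$ with $\cl(W)$ compact, and by convergence some $U\in\F$ satisfies $U\subseteq W$; then $\cl(U)$ is a closed subset of the compact set $\cl(W)$, hence compact. Conversely, suppose some $U_0\in\F$ has compact closure. Given $U_1,\dots,U_n\in\F$, iterating the prefilter condition yields $U\in\F$ with $\emptyset\ne U\subseteq U_1\cap\cdots\cap U_n$, so $\cl(U_1)\cap\cdots\cap\cl(U_n)\ne\emptyset$; thus the family $\{\cl(U):U\in\F\}$ of closed sets has the finite intersection property. Since $\cl(U_0)$ belongs to this family and is compact, the total intersection $\bigcap_{U\in\F}\cl(U)$ is nonempty, and by part~(1) every point of this intersection is a limit of $\F$.
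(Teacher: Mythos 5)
Your proof is correct and follows essentially the same strategy as the paper's: the converse of (1) is handled by adjoining intersections with basis neighborhoods of $p$ to $\F$ and invoking maximality (you do this one neighborhood at a time where the paper uses all of $\B_p$ at once, a cosmetic difference), and (2) is the same finite-intersection-property argument. No gaps.
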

\begin{proof}
For (1), let $\mathcal{B}_p$ be the collection of all elements of $\B$ that contain $p$.  If $\F$ converges to $p$, then by Proposition~\ref{prop:Ultrafilters}(3) we know that $\B_p\subseteq\F$.  Since any two elements of $\F$ intersect, it follows that every element of $\F$ intersects every element of~$\B_p$, and therefore $p\in\cl(U)$ for all~$U\in\F$.

For the converse, suppose $p\in\bigcap_{U\in\F} \cl(U)$.  Let $\B_p'=\B_p\cup\{Y\}$, and let
\[
\F' = \{U\cap V \mid U\in \F\text{ and }V\in\B_p'\}.
\]
Note that every set in $\F'$ is nonempty since $p\in\bigcap_{U\in\F}\cl(U)$, and therefore  $\F'\subseteq \mathcal{B}$. 
We claim that $\F'$ is a prefilter.  To see this, let $U\cap V$ and $U'\cap V'$ be elements of $\F'$, where $U,U'\in\F$ and $V,V'\in\B_p'$.  Since $\F$ is a prefilter, there exists a $U''\in\F$ which is contained in $U\cap U'$.  Then $U''\cap (V\cap V')$ is an element of $\F'$ that is contained in both $U\cap V$ and $U'\cap V'$, so $\F'$ is a prefilter.  Since $\F$ is a maximal prefilter and $\F\subseteq \F'$, we conclude that $\F=\F'$, and it follows easily that $\F$ converges to~$p$.

For the last statement, suppose that $Y$ is locally compact.  If $\F$ converges to a point $p\in Y$, then since $p$ has a neighborhood $U$ whose closure is compact, any element of $\F$ that is contained in $U$ must have  compact closure.  Conversely, if $\F$ has at least one set with compact closure, then since the closures of the elements of $\F$ have the finite intersection property, the intersection $\bigcap_{U\in\mathcal{F}} \mathrm{cl}(U)$ must be nonempty, and therefore $\F$ converges to some point~$p$ by statement~(1).
\end{proof}

Given a set $U\in \R$, let
\[
\R_{\leq U} = \{V\in\R \mid V\subseteq U\}.
\]
Also, given an ultrafilter $\F\subseteq \R$ and a subgroup $H\leq G$, define the \newword{orbit} of $\F$ under $H$ to be the set
\[
\Orb(\F,H) = \{h(U) \mid h\in H\text{ and }U\in \F\}.
\]
The following proposition shows that we can reconstruct the relation ``$\F$ converges to a point in $U$'' from the algebraic structure of~$G$.

\begin{proposition}\label{prop:SecondStage}
Suppose we are given a Rubin action of a group $G$ on a space~$X$, and let $\R$ be the associated poset.  Then for each $U\in\R$ and each ultrafilter $\F\subseteq\R$, the following are equivalent:
\begin{enumerate}
    \item $\F$ converges to some point in $U$.\smallskip
    \item $\Orb(\F,G_U)$ contains\/ $\R_{\leq V}$ for some $V\in\R$ with $V\subseteq U$.
\end{enumerate}
\end{proposition}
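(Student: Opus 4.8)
The plan is to prove the two implications separately. Throughout I would use that $\R$ is a $G$-invariant basis for $X$ (Proposition~\ref{prop:BasisTopology}) which is closed under finite nonempty intersections, so that Propositions~\ref{prop:Ultrafilters} and~\ref{prop:Ultrafilters2} apply with $\B=\R$; the workhorse is Proposition~\ref{prop:Ultrafilters}(3), that an ultrafilter $\F\subseteq\R$ converges to $p$ exactly when $\F$ contains every element of $\R$ that contains $p$.

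For $(1)\Rightarrow(2)$, suppose $\F$ converges to a point $p\in U$. Here is where I would use the full strength of a Rubin action: since $U$ is open and $p\in U$, local density gives an open neighborhood $N$ of $p$ contained in the closure of the orbit of $p$ under $G_U$. Using that $\R$ is a basis, I would fix $V\in\R$ with $p\in V\subseteq N\cap U$, so that $V\subseteq U$ as required by~(2). Then for an arbitrary $W\in\R$ with $W\subseteq V$: since $W$ is a nonempty open subset of $N$, it must meet the orbit of $p$ under $G_U$, so there is $h\in G_U$ with $h(p)\in W$. Now $h^{-1}(W)$ lies in $\R$ (as $\R$ is $G$-invariant) and contains $p$, so $h^{-1}(W)\in\F$ by Proposition~\ref{prop:Ultrafilters}(3); since $h\in G_U$ and $W=h\bigl(h^{-1}(W)\bigr)$, this gives $W\in\Orb(\F,G_U)$. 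Hence $\R_{\leq V}\subseteq\Orb(\F,G_U)$.

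For $(2)\Rightarrow(1)$, suppose $\R_{\leq V}\subseteq\Orb(\F,G_U)$ for some $V\in\R$ with $V\subseteq U$. First I would use local compactness --- exactly as in the proof of Proposition~\ref{prop:BasisTopology} --- to pick $W\in\R$ with $\cl(W)$ compact and $\cl(W)\subseteq V$: take a point of $V$, a compact-closure neighborhood of it contained in $V$, and shrink it to an element of $\R$. Since $W\in\R_{\leq V}\subseteq\Orb(\F,G_U)$, I can write $W=h(A)$ with $h\in G_U$ and $A\in\F$, so $A=h^{-1}(W)$ and $\cl(A)=h^{-1}\bigl(\cl(W)\bigr)$ is compact. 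By Proposition~\ref{prop:Ultrafilters2}(2), $\F$ then converges to some point $p$, and by Proposition~\ref{prop:Ultrafilters2}(1) we have $p\in\cl(A)$. Finally, since $h\in G_U$ fixes $X\setminus U$ pointwise it preserves $U$, so
\[
p\in\cl(A)=h^{-1}\bigl(\cl(W)\bigr)\subseteq h^{-1}(U)=U,
\]
as $\cl(W)\subseteq V\subseteq U$; thus $\F$ converges to a point of $U$.

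I expect the bulk of the proof to be bookkeeping of this kind. The one genuinely substantive point is the appeal to local density in $(1)\Rightarrow(2)$: it is precisely what guarantees that the orbit of $p$ under $G_U$ is dense near $p$, so that it meets every sufficiently small element of $\R$ --- and this is the one step that would fail for a merely locally moving group. The only thing to be careful about in $(2)\Rightarrow(1)$ is keeping the limit point inside $U$, which is why I would insist on choosing $W$ with $\cl(W)\subseteq U$ and invoke the fact that elements of $G_U$ preserve $U$.
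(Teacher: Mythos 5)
Your proof is correct and follows essentially the same route as the paper: local density plus Proposition~\ref{prop:Ultrafilters}(3) for $(1)\Rightarrow(2)$, and a compact-closure element of $\F$ fed into Proposition~\ref{prop:Ultrafilters2} for $(2)\Rightarrow(1)$. The only cosmetic difference is that in $(1)\Rightarrow(2)$ you obtain the set $V\in\R$ inside the orbit-closure neighborhood by citing Proposition~\ref{prop:BasisTopology}, whereas the paper redoes that construction inline as $V'=\rsupp(g)$ for a nontrivial $g\in G_V$.
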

\begin{proof}
Suppose first that $\F$ converges to some point $p\in U$.  Since $G$ is locally dense, the closure of the orbit $\Orb(p,G_U)$ of $p$ under $G_U$ contains a neighborhood~$V$ of~$p$.  Let $g$ be a nontrivial element of $G_V$, let $V'=\rsupp(g)$, and note that $V'\subseteq U$. We claim that $\Orb(\F,G_U)$ contains $\R_{\leq V'}$

Let $W\in \R_{\leq V'}$.  Since $W$ is open and $W\subseteq \cl\bigl(\Orb(p,G_U)\bigr)$, there exists an $h\in G_U$ such that $h(p)\in W$.  Then $h^{-1}(W)$ lies in $\R$ and is a neighborhood of~$p$.  Since $\F$ converges to $p$, it follows from Proposition~\ref{prop:Ultrafilters}(3) that $h^{-1}(W)\in \F$. Since $h\in G_U$, we conclude that $W\in \Orb(\F,G_U)$, 
and therefore $\Orb(\F,G_U)$ contains~$\R_{\leq V'}$.

For the converse, suppose $\Orb(\F,G_U)$ contains $\R_{\leq V}$ for some $V\in\R$ with $V\subseteq U$.  Since $X$ is locally compact and $\R$ is a basis for the topology on~$X$ by Proposition~\ref{prop:BasisTopology}, there exists a $V'\in\R_{\leq V}$ such that $\cl(V')$ is compact and is contained in~$V$.  We know that $V'\in\Orb(\F,G_U)$, so $g(V')\in \F$ for some $g\in G_U$.  But $g(V')$ is compact, so it follows from Proposition~\ref{prop:Ultrafilters2}(2) that $\F$ converges to some point~$p\in X$.  By Proposition~\ref{prop:Ultrafilters2}(1), the point $p$ lies in $\cl(g(V'))=g(\cl(V'))$, which is a subset of~$U$ since $\cl(V')\subseteq V\subseteq U$ and $g\in G_U$, and therefore $p\in U$.
\end{proof}

We can now reconstruct the space~$X$ for a Rubin action, finishing the proof of Rubin's theorem.  Given an ultrafilter $\F\subseteq\R$ and a set $U\in\R$, write $\F\searrow U$ if $\F$ converges to a point in~$U$.  By Proposition~\ref{prop:SecondStage}, we can reconstruct the relation $\searrow$ entirely from the algebraic structure of~$G$.  Define two ultrafilters $\F,\F'\subseteq\R$ to be \newword{equivalent} if
\[
\F\searrow U\qquad\Leftrightarrow\qquad \F'\searrow U
\]
for all $U\in\R$.  If $\F\subseteq\R$ is an ultrafilter, let $[\F]$ denote its equivalence class.  Then we can reconstruct $X$ as the set
\[
\widetilde{X} = \{[\F] \mid \F\subseteq\R\text{ is an ultrafilter and $\F\searrow U$ for some $U\in\R$}\}.
\]
The sets
\[
\{[\F]\in \widetilde{X} \mid \F\searrow U\}
\]
for $U\in\R$ form a basis for a topology on~$\widetilde{X}$, and the mapping $X\to \widetilde{X}$ that sends each $p\in X$ to the collection of ultrafilters in $\R$ that converge to $p$ is a homeomorphism.  (Recall from Proposition~\ref{prop:Ultrafilters}(2) that each point in $X$ has at least one ultrafilter that converges to it.)  Finally, the action of $G$ on $\R$ induces an action of $G$ on the ultrafilters in~$\R$, which in turn defines an action of $G$ on $\widetilde{X}$, and the homeomorphism $X\to\widetilde{X}$ is clearly $G$-equivariant.

\section*{Appendix: An algebraic disjointness proof}
\label{sec:Appendix}

He we record the following proposition and its proof.

\begin{proposition*}
Let $G$ be the group of all homeomorphisms of the Cantor set\/ $\{0,1\}^\omega$, and let $f\in G$ be the homeomorphism that switches the first digit of an infinite binary sequence.  Then $f$ is algebraically disjoint from itself.
\end{proposition*}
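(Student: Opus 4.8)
The plan is to unwind the definition of algebraic disjointness directly: for each witness $h\in G$ with $[f,h]\ne 1$, I want to exhibit $f_1,f_2\in C_G(f)$ with $\bigl[f_1,[f_2,h]\bigr]$ a nontrivial element of $C_G(f)$. Throughout it is convenient to write $X=\{0,1\}\times Y$, where $Y$ is the Cantor set of tails $x_1x_2\cdots$, so that $f(i,z)=(1-i,z)$; then $C_G(f)$ contains $f$ itself as well as, for every $\kappa\in\mathrm{Homeo}(Y)$, the diagonal homeomorphism $k_\kappa\colon(i,z)\mapsto(i,\kappa(z))$. My choice will always be $f_1:=f$ and $f_2:=k_\kappa$ for a carefully selected \emph{involution} $\kappa$ of $Y$.

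The reason for taking $f_2$ to be an involution is a short commutator computation. Set $K:=hk_\kappa h^{-1}$ — a conjugate of the involution $k_\kappa$, hence itself a nontrivial involution — and $K^{\sharp}:=fKf^{-1}$. Then $[k_\kappa,h]=k_\kappa K$, and so
\[
\bigl[f,[k_\kappa,h]\bigr]\;=\;k_\kappa\,K^{\sharp}K\,k_\kappa .
\]
If $K$ and $K^{\sharp}$ commute, this element is an involution; moreover, conjugation by $f$ fixes $k_\kappa$ and interchanges $K$ with $K^{\sharp}$ (since $f^2=1$), so in that case the displayed element is invariant under conjugation by $f$ and hence lies in $C_G(f)$. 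It is nontrivial because $\bigl[f,[k_\kappa,h]\bigr]=1$ would force $K^{\sharp}=K$, which is impossible as soon as $K$ and $K^{\sharp}$ have disjoint, nonempty supports. Thus everything comes down to choosing the involution $\kappa$ so that $\supp(K)$ and $\supp(K^{\sharp})=f\bigl(\supp(K)\bigr)$ are disjoint.

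Since $\supp(K)=h\bigl(\supp(k_\kappa)\bigr)=h\bigl(\{0,1\}\times\supp\kappa\bigr)$, it suffices to produce a nonempty $f$-invariant clopen set $V_0\subseteq X$ with $h(V_0)\cap f\bigl(h(V_0)\bigr)=\emptyset$, and then take $\kappa$ to be any nontrivial involution of $Y$ supported inside the projection of $V_0$ to $Y$ (such an involution exists because a nonempty clopen subset of the Cantor set is again a Cantor set). Applying $h^{-1}$, the requirement on $V_0$ becomes $V_0\cap\phi(V_0)=\emptyset$, where $\phi:=h^{-1}fh$. Now $\phi$ is conjugate to $f$, hence a fixed-point-free involution, and $\phi\ne f$ exactly because $[f,h]\ne 1$. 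Since $\phi\ne f$, there is an $f$-orbit $O=\{q,f(q)\}$ on which $\phi$ and $f$ disagree; because $\phi$ is a fixed-point-free involution, this forces $\phi(O)\cap O=\emptyset$ — if $\phi(O)$ met $O$ it would have to equal $O$, with $\phi$ restricting to $f$ on it. Taking a sufficiently small clopen neighborhood $U$ of $q$ and setting $V_0:=U\cup f(U)$ then does the job.

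The step I expect to be the main obstacle is exactly this: getting $\bigl[f_1,[f_2,h]\bigr]$ to lie genuinely in $C_G(f)$, rather than merely to have small support. In the proof of Proposition~\ref{prop:AlgebraicDisjointness} one arranges for $\bigl[f_1,[f_2,h]\bigr]$ to be supported inside $\supp(f)$ and then appeals to disjointness of supports; here that route is closed, because $\supp(f)=X$, so $\bigl[f_1,[f_2,h]\bigr]$ has to be made to commute with the bit-flip $f$ on the nose. The choice $f_1=f$, $f_2=k_\kappa$ with $\kappa$ an involution is tailored to this: it reduces the requirement to the elementary task of placing a conjugate of $\supp(k_\kappa)$ off its image under $f$, which is possible precisely because $h^{-1}fh\ne f$. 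Once this is set up, the remaining commutator and support bookkeeping is routine and completes the proof that $f$ is algebraically disjoint from itself.
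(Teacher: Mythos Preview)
Your argument is correct. Both your proof and the paper's choose $f_1=f$ and take $f_2$ to be an $f$-symmetric involution with small support (your $k_\kappa$ is exactly the paper's $kk^f$ once one notes that the paper's clopen set $U$ satisfies $U\cap f(U)=\emptyset$), and both verify that the resulting double commutator is $f$-invariant by the obvious symmetry under conjugation by~$f$. The difference is in the bookkeeping for nontriviality: the paper expands $[f,[f_2,h]]$ into eight conjugates of a local involution~$k$, cancels to four, and then separates those four supports by first choosing a point $p$ generic with respect to a finite set $S'$ of words in $f,h$; you instead package everything into the single conjugate $K=hk_\kappa h^{-1}$ and its $f$-twist $K^{\sharp}$, so that the whole problem reduces to the clean geometric condition $h(V_0)\cap fh(V_0)=\emptyset$ for some $f$-invariant clopen~$V_0$, equivalently $V_0\cap\phi(V_0)=\emptyset$ for $\phi=h^{-1}fh\ne f$. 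This is the same proof at heart, but your reduction makes the role of the hypothesis $[f,h]\ne 1$ (i.e.\ $\phi\ne f$) more transparent and avoids the eight-term expansion.
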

\begin{proof}
Let $h\in G$ so that $[f,h]\ne 1$. Let $S=\{1,f,h,hf,fh,h^f\}$, and let $S'=\{s_1^{-1}s_2 \mid s\in S\}$, and note that $\bigl[h^{-1},f\bigr]\in S'$.  Then we can inductively find a point $p\in \{0,1\}^\omega$ with the following properties:
\begin{enumerate}
    \item $\bigl[h^{-1},f\bigr](p)\ne p$, and\smallskip
    \item For each $s\in S'$, either $s(p)\ne p$ or $s$ is the identity in a neighborhood of~$p$.
\end{enumerate}
Let $U$ be a neighborhood of $p$ such that for each $s\in S'$, either $s$ is the identity on $U$ or $s(U)$ is disjoint from~$U$.  Let $k\in G$ be an element of order two that is supported on $U$.  Note then that $k^s$ commutes with $k$ for all $s\in S'$, so $k^{s_1^{-1}}$ commutes with $k^{s_2^{-1}}$ for all $s_1,s_2\in S$. 
 Let $f_2=kk^f$, and note that $f_2\in C_G(f)$.  We have
\[
[f,[f_2,h]] = k^f k k^{h^{-1}f} k^{fh^{-1}f} k^{fh^{-1}} k^{h^{-1}} k^f k
\]
Since $1,f,fh,h^f,h,hf\in S$, all of these conjugates of $k$ commute.  Since $k^f$ and $k$ have order two, this expression simplifies to
\[
[f,[f_2,h]] = k^{h^{-1}}k^{h^{-1}f}k^{fh^{-1}}k^{fh^{-1}f}.
\]
Note that conjugation by $f$ maps this element to itself, so $[f,[f_2,h]]\in C_G(f)$.  To prove that this element is nontrivial, observe that these four conjugates of $k$ are supported on $h(U)$, $fh(U)$, $hf(U)$, and $h^f(U)$, respectively.  Since $f$ and $f^h$ have no fixed points and lie in~$S'$, we know that $f(U)$ and $f^h(U)$ are disjoint from $U$, and it follows that $hf(U)$ and $fh(U)$ are both disjoint from $h(U)$.  Furthermore, since $\bigl[h^{-1},f\bigr](p)\ne p$ and $\bigl[h^{-1},f\bigr]\in S'$, we know that $\bigl[h^{-1},f\bigr](U)$ is disjoint from~$U$, and hence $h^f(U)$ is disjoint from $h(U)$.  Thus $k^{h^{-1}}$ is not equal to any of the other three conjugates, so $[f,[f_2,h]]$ agrees with $k^{h^{-1}}$ on $h(U)$, and therefore $[f,[f_2,h]]\ne 1$.
\end{proof}

\section*{Acknowledgments}

The authors would like to thank Collin Bleak and Matthew Brin for encouraging us to publish this manuscript. We would also like to thank 
Sang-hyun Kim and Thomas Koberda for drawing our attention to their recent book~\cite{KimKob}.
Moreover,
the authors thank Matthew Brin for pointing out a mistake in a preliminary version of this paper, as well as James Hyde, Matthew Zaremsky, and an anonymous referee for offering helpful comments and suggestions.

\bigskip
\newcommand{\arxiv}[1]{\href{https://arxiv.org/abs/#1}{\blue{arXiv:#1}}}
\newcommand{\doi}[1]{\href{https://doi.org/#1}{\blue{doi:#1}}}
\bibliographystyle{plain}

\end{document}